\providecommand{\href}[2]{#2}
\providecommand*{\backref}{}
\providecommand*{\backrefalt}{}
\renewcommand*{\backref}[1]{}
\renewcommand*{\backrefalt}[4]{%
	\ifcase #1 %
	\or
	  Cited page~#2.
	\else
	  Cited pages~#2.
	\fi
}
\newcommand\MTkillspecial[1]{
  \bgroup
  \catcode`\&=9
  \let\\\relax%
  \scantokens{#1}%
  \egroup
}
\newcommand\DeclarePairedDelimiterMultiline[3]{
  \DeclarePairedDelimiter{#1}{#2}{#3}
  \reDeclarePairedDelimiterInnerWrapper{#1}{star}{
    \mathopen{##1\vphantom{\MTkillspecial{##2}}\kern-\nulldelimiterspace\right.}
    ##2
    \mathclose{\left.\kern-\nulldelimiterspace\vphantom{\MTkillspecial{##2}}##3}}
}
\newcommand{\Pbb}{\mathbb{P}}
\newcommand{\Sbb}{\mathbb{S}}
\newcommand{\Z}{\mathbb{Z}}
\newcommand{\N}{\mathbb{N}}
\newcommand{\R}{\mathbb{R}}
\newcommand{\boC}{\mathcal{C}}
\newcommand{\dd}{\mathop{}\!\mathrm{d}}
\DeclarePairedDelimiterMultiline{\abs}{\lvert}{\rvert}
\DeclarePairedDelimiterMultiline{\norm}{\lVert}{\rVert}
\newcommand{\st}{\::\:}
\renewcommand{\epsilon}{\varepsilon}
\renewcommand{\phi}{\varphi}
\renewcommand{\leq}{\leqslant}
\renewcommand{\geq}{\geqslant}
\newtheorem{thm}{Theorem}[section]
\newtheorem{prop}[thm]{Proposition}
\newtheorem{lem}[thm]{Lemma}
\newtheorem*{prop*}{Proposition}
\theoremstyle{definition}
\newtheorem{rmk}[thm]{Remark}
\numberwithin{equation}{section}
\title{Growth of normalizing sequences in limit theorems for conservative maps}
\author{S\'ebastien Gou\"ezel}
\address{Laboratoire Jean Leray, CNRS UMR 6629,
Universit\'e de Nantes, 2 rue de la
Houssini\`ere,
44322 Nantes, France}
\email{sebastien.gouezel@univ-nantes.fr}
\date{\today}
\begin{document}

\begin{abstract}
We consider normalizing sequences that can give rise to nondegenerate limit
theorems for Birkhoff sums under the iteration of a conservative map. Most
classical limit theorems involve normalizing sequences that are polynomial,
possibly with an additional slowly varying factor. We show that, in
general, there can be no nondegenerate limit theorem with a normalizing
sequence that grows exponentially, but that there are examples where it
grows like a stretched exponential, with an exponent arbitrarily close to
$1$.
\end{abstract}

\maketitle

Let $T$ be a non-singular map on a measure space $(X,m)$, and $P$ a
probability measure which is absolutely continuous with respect to $m$. Given
a measurable function $f:X \to \R$, we say that its Birkhoff sums $S_n f =
\sum_{k=0}^{n-1} f \circ T^k$ satisfy a limit theorem if there is a sequence
$B_n \to \infty$ such that $S_n f/B_n$ converges in distribution with respect
to $P$ to a non-degenerate real random variable $Z$ (by non-degenerate, we
mean that $Z$ is not constant almost surely). In this case, the asymptotic
behavior of the sequence $B_n$ and the limiting variable $Z$ are defined
uniquely, up to scalar multiplication (see for instance~\cite[Theorem
14.2]{billingsley:book}).

There are many examples of such non-degenerate limit theorems in the
literature. Let us only mention the following ones:
\begin{enumerate}
\item Let $Y_0, Y_1,\dotsc$ be a sequence of i.i.d.~random variables which
    are in the domain of attraction of a stable law $Z$ of index $\alpha
    \in (0,2]$, and are centered if integrable. Then, for some slowly
    varying function $L$, one has the convergence $(Y_0+\dotsb+Y_{n-1})/
    (n^{1/\alpha} L(n)) \to Z$. This can be recast in dynamical terms as
    follows. Let $X = \R^{\N}$ with the measure $m = \Pbb_{Y_0}^{\otimes
    \N}$ and the left shift $T$. Define $f(x_0,x_1,\dotsc) = x_0$. Then
    $S_n f$ is distributed as $Y_0+\dotsb+Y_{n-1}$, and $S_n
    f/(n^{1/\alpha}L(n))$ converges to $Z$. For $\alpha=2$ and $L(n)=1$,
    this is an instance of the classical central limit theorem.
\item When $T$ preserves $m$ and $m$ has infinite mass, one obtains
    different limits. Let $\alpha \in [0,1)$ (we exclude $\alpha=1$ to
    avoid degenerate limit laws). The Darling-Kac theorem gives examples of
    transformations $T$ such that, for any function $f$ which is integrable
    and has non-zero average for $m$, then $S_n f/(n^\alpha L(n))$
    converges in distribution with respect to any probability measure $P\ll
    m$ to a Mittag-Leffler distribution of index $\alpha$ (where $L$ is a
    slowly varying function). See~\cite{aaronson_book, thaler_zweimuller}.
    This has been extended to functions with zero average by
    Thomine~\cite{thomine}, where the normalization becomes $(n^\alpha
    L(n))^{1/2}$ and the limit distribution is modified accordingly.
\item In the same setting as in the second example, take a function $f=1_E$
    where $E$ is a set of infinite measure. Thus, $S_n f/n$ records the
    proportion of time an orbit spends in $E$. Then one can devise such
    sets $E$ for which $S_n f/n$ converges in distribution to generalized
    arcine laws $\mathcal{L}_{\alpha,\beta}$, depending on $\alpha$ and an
    additional parameter $\beta \in (0,1)$. (We exclude $\beta=0$ and
    $\beta=1$ to ensure that $\mathcal{L}_{\alpha,\beta}$ is
    nondegenerate). See~\cite{thaler_zweimuller}.
\item Here is a nonergodic example. On $\Sbb^1 \times \Sbb^1$, consider the
    map $T(\alpha, x) = (\alpha, x + \alpha)$. Define a function $f(\alpha,
    x) = 1_{[0,r]}(x) - r$, where $r\in (0,1)$ is fixed. Then, with respect
    to Lebesgue measure, $S_n f/\log n$ converges in distribution to a
    Cauchy random variable. This theorem is due to
    Kesten~\cite{kesten_rotation}. It has been generalized recently to
    higher dimension by Dolgopyat and Fayad~\cite{dolgopyat_fayad1,
    dolgopyat_fayad2}: in dimension $d$, replacing the characteristic
    function of the interval by the characteristic function of a nice
    subset $\boC$ and adding a randomization over $r$, they obtain limit
    theorems with normalizations $n^{(d-1)/2d}$ or $(\log n)^d$ depending
    on the geometric characteristics of $\boC$.
\item It is possible to get any limiting distribution: Thouvenot and Weiss
    show in~\cite{thouvenot_weiss} that, for any probability-preserving
    system $T$, and for any real random variable $Z$, there exists a
    function $f$ such that $S_n f/n$ converges in distribution to $Z$.
    Moreover, if $Z$ is nonnegative, one can also take $f$ nonnegative if
    one is ready to replace the normalizing sequence by $n L(n)$ where $L$
    is slowly varying, see~\cite{aaronson_weiss_limits}.
\end{enumerate}

These examples show that the possible limit theorems are extremely diverse,
even in natural examples. However, in all these examples, the normalizing
sequence is of the form $n^\alpha L(n)$, where $\alpha \in [0,\infty)$ and
$L$ is a slowly varying sequence. All such $\alpha$ are realized in some
examples above. Our goal in this short note is to discuss how general this
restriction can be, and in particular the possible growth of a normalizing
sequence.

It is easy to see that, in probability-preserving systems, a normalizing
sequence has to grow at most polynomially. This is proved in
Proposition~\ref{prop:finite}. Our main result is that, in conservative
systems, a normalizing sequence can not grow exponentially
(Theorem~\ref{thm:infinite}), but that it can grow almost exponentially: we
exhibit examples of non-degenerate limit theorems with normalizing sequence
$e^{n^\alpha}$ for any $\alpha \in (0,1)$.

\section{The case of probability-preserving systems}

\begin{prop}
\label{prop:finite} Let $T$ be a probability-preserving map on a space $X$,
and $f:X\to \R$ a function such that $S_n f/B_n \to Z$ for some normalizing
sequence $B_n \to \infty$, where $Z$ is a real random variable which is not
almost surely $0$. Then $B_{n+1}/B_n \to 1$, and $B_n$ grows at most
polynomially, i.e., there exists $C$ such that $B_n=O(n^C)$.
\end{prop}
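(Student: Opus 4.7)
The plan is to prove the two assertions in sequence. First I would establish $B_{n+1}/B_n\to 1$ by a convergence-of-types argument; then I would prove that $B_{2n}/B_n$ stays bounded, and iterate to get the polynomial bound. Both steps rest on the splitting $S_{n+m}f = S_n f + S_m f\circ T^n$ combined with the $T$-invariance of $m$, together with the following uniform-integrability fact: setting $\phi = dP/dm \in L^1(m)$, the densities $d(T_*^n P)/dm = \mathcal{L}^n \phi$ (where $\mathcal{L}$ is the transfer operator dual to $T$ with respect to $m$) form a uniformly integrable family in $L^1(m)$. Equivalently, the push-forwards $T_*^n P$ are uniformly absolutely continuous with respect to $m$. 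This is a standard consequence of $\mathcal{L}$ being a Markov operator on $L^1(m)$.

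For the first assertion, I would write $S_{n+1}f = S_n f + f\circ T^n$. Invariance of $m$ yields $m(|f\circ T^n|>\epsilon B_n) = m(|f|>\epsilon B_n) \to 0$, and the uniform absolute continuity transfers this to $P(|f\circ T^n|>\epsilon B_n) = (T_*^n P)(|f|>\epsilon B_n)\to 0$. Therefore $f\circ T^n/B_n\to 0$ in $P$-probability, so $S_{n+1}f/B_n$ and $S_{n+1}f/B_{n+1}$ both converge in $P$-distribution to $Z$. By the convergence-of-types theorem (Billingsley's Theorem~14.2, already cited in the introduction), $B_{n+1}/B_n\to c\in(0,\infty)$ with $cZ$ equal in distribution to $Z$, and non-degeneracy of $Z$ combined with $c>0$ forces $c=1$.

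For the polynomial bound, the crux is $\sup_n B_{2n}/B_n<\infty$. I would decompose $S_{2n}f = S_n f + S_n f\circ T^n$. The first summand divided by $B_n$ converges to $Z$, hence is $P$-tight. For the second summand, the $P$-distribution of $S_n f\circ T^n/B_n$ coincides with the $T_*^n P$-distribution of $S_n f/B_n$, and the uniform absolute continuity allows one to transfer $P$-tightness of $S_n f/B_n$ to tightness under the family $\{T_*^n P\}$, so that $S_n f\circ T^n/B_n$ is $P$-tight as well. Consequently $S_{2n}f/B_n$ is $P$-tight, and if $B_{2n_j}/B_{n_j}\to\infty$ along some subsequence, then $S_{2n_j}f/B_{2n_j} = (B_{n_j}/B_{2n_j})\cdot S_{2n_j}f/B_{n_j}\to 0$ in $P$-probability along that subsequence, contradicting convergence to the non-degenerate $Z$. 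Given $B_{2n}\leq C B_n$ for $n\geq N_0$, I would combine this with $B_{n+1}\leq 2B_n$ for $n$ large (from the first step, to handle odd indices) to obtain $B_n\leq 2C\cdot B_{\lfloor n/2\rfloor}$; iterating this dyadic recursion $\sim\log_2 n$ times yields $B_n = O(n^{\log_2(2C)})$, the desired polynomial bound.

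The hard part, I expect, is the uniform-integrability ingredient used to transfer $P$-tightness to tightness under the time-shifts $T_*^n P$: although a standard property of Markov operators, its precise formulation requires care --- particularly when $P$ is not equivalent to $m$ on all of $X$, so that a direct Radon--Nikodym comparison between the two measures is unavailable. Once this technical point is settled, the remainder is a clean application of the convergence of types theorem and a dyadic iteration.
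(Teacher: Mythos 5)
Your proof is correct for the statement as intended and follows essentially the same route as the paper: the decompositions $S_{n+1}f=S_nf+f\circ T^n$ and $S_{2n}f=S_nf+S_nf\circ T^n$, convergence of types for $B_{n+1}/B_n\to 1$, tightness of $S_{2n}f/B_n$ against non-degeneracy of $Z$ to bound $B_{2n}/B_n$, and the dyadic iteration. The one remark worth making is that the proposition (and the paper's proof) takes the reference probability to be the preserved measure itself, so $T_*^nP=P$ and your uniform-integrability apparatus for $\mathcal{L}^n\phi$ is vacuous; in that setting your argument closes with no loose ends. In the more general setting you aim at ($P\ll m$ with $P\neq m$), the step you yourself flag as delicate is a genuine gap, not a technicality: uniform absolute continuity of $\{T_*^nP\}$ with respect to $m$ lets you pass from $m$-smallness to $T_*^nP$-smallness, but to exploit it you would first need $m(\abs{S_nf}>LB_n)$ small, and the hypothesis only gives $P(\abs{S_nf}>LB_n)$ small --- the wrong direction of absolute continuity. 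Closing that gap requires an extra input (Eagleson's theorem, or a recurrence argument of the kind the paper deploys in Lemmas~\ref{lem:C} and~\ref{lem:measure_disjoint} for the conservative case); it does not follow from $\mathcal{L}$ being a Markov operator alone.
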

The proof is closely related to the proof of the normalizing constant
proposition in~\cite[Page~5]{aaronson_weiss_limits}
\begin{proof}
One has $S_n f/B_{n-1} = S_{n-1}f/B_{n-1} + f \circ T^{n-1}/B_{n-1}$.
Moreover, $f \circ T^{n-1}/B_{n-1}$ has the same distribution as $f/B_{n-1}$
as the probability is $T$-invariant. Since $B_n \to \infty$, one gets that
$f\circ T^{n-1}/B_{n-1}$ tends in probability to $0$. Since
$S_{n-1}f/B_{n-1}$ tends in distribution to $Z$, we deduce that $S_n
f/B_{n-1} \to Z$. Since $S_n f/B_n \to Z$ and $Z$ is non-degenerate, it
follows from the convergence of type theorem (see~\cite[Theorem
14.2]{billingsley:book}) that $B_n/B_{n-1} \to 1$.

Since $Z$ is not concentrated at $0$, there exist $\epsilon>0$ and $\alpha>0$
such that $\Pbb(\abs{Z}>\epsilon)>2\alpha$. Then, for large enough $n$, we
have
\begin{equation}
\label{eq:pupiouopiqusf}
\mu(\abs{S_n f}>B_n \epsilon)>3\alpha.
\end{equation}
Consider also $L$ such that $\Pbb(\abs{Z}\geq L) < \alpha$. Then, for large
enough, we have $\mu(\abs{S_n f} < L B_n) > 1 - \alpha$. Since the measure is
invariant, we also have $\mu(\abs{S_n f}(T^n x) < L B_n) > 1 - \alpha$. On
the intersection of these two events, $\abs{S_{2n} f} < 2LB_n$, and moreover
this happens with probability at least $1-2\alpha$. Since $\abs{S_{2n}f}>
B_{2n} \epsilon$ with probability at least $3\alpha$
by~\eqref{eq:pupiouopiqusf}, these two events intersect, and we obtain
$B_{2n}\epsilon \leq 2L B_n$. This shows that there exists a constant $C$
with $B_{2n} \leq C B_n$.

We have shown that $B_{2n}\leq C B_n$ and $B_{n+1} \leq C B_n$, for some
constant $C$. We deduce that $B_{2n+1} \leq C^2 B_n$ and $B_{2n} \leq C^2
B_n$. It follows by induction on $r$ that, for $n \in [2^r, 2^{r+1})$, one
has $B_n \leq C^{2r} B_1$, which is bounded by $2 B_1 \cdot n^{2 \log C/\log
2}$. This shows that $B_n$ grows at most polynomially.
\end{proof}
\begin{rmk}
The proof shows that the polynomial growth exponent for $B_n$ is bounded
solely in terms of the distribution of the limiting random variable $Z$.
\end{rmk}

\begin{rmk}
The assumption that $B_n \to \infty$ in Proposition~\ref{prop:finite} is
merely for convenience. Indeed, without this assumption, one still has that
$B_{n+1}/B_n$ tends to $1$ along integers $n$ where $B_n \to \infty$ (or
$B_{n+1} \to \infty$), with the same proof. In particular, there exists a
constant $D$ such that $B_{n+1} \leq 2 B_n$ if $B_{n+1} \geq D$. One deduces
that $B_n \leq D\cdot C^{2r}$ for $n \in [2^r, 2^{r+1})$, by separating the
case where $B_n < D$ (for which one can stop directly, without using the
induction) and $B_n \geq D$ (for which one uses the inductive assumption as
in the proof of Proposition~\ref{prop:finite}). This shows again that $B_n$
grows at most polynomially.
\end{rmk}

\section{Subexponential growth for normalizing sequences in conservative systems}

The proof of Proposition~\ref{prop:finite} relies crucially on the fact that
the measure is invariant and has finite mass. In the opposite direction, if
one does not assume any kind of recurrence, then one can get any behavior.
For instance, consider the right shift on $\Z$ and define the function $f(k)
= 2^k$. Then $S_n f(x)/2^n$ converges to $2^x$. This means that we get a
limit theorem, but which depends on the starting measure: Starting from
$\delta_0/3 + 2\delta_1/3$ or from $2\delta_0/3 + \delta_1/3$, say, we get
two different limit distributions, for the normalizing sequence $2^n$. For
another silly example, on $\Z \times \{-1,1\}$, let $T(n,a) = (n+1, a)$ and
$f(n,a) = a 2^{2^n}$. Then $S_n f/2^{2^{n-1}}$ converges in distribution with
respect to $P=(\delta_{(0,1)}+\delta_{(0,-1)})/2$ towards a Bernoulli random
variable. These examples show that there is nothing interesting to say in the
nonconservative case.

\medskip

When the map is conservative, on the other hand, we can use the
conservativity to obtain some rigidity. The next theorem shows in particular
that there can be no nontrivial limit theorem with a normalizing sequence
$2^n$.

\begin{thm}
\label{thm:infinite} Let $T$ be a conservative map on the measure space
$(X,m)$. Consider a probability measure $P$ which is absolutely continuous
with respect to $m$, and a measurable function $f$. Assume that $S_n f/B_n$
converges in distribution with respect to $P$ towards a limiting random
variable $Z$ which is not almost surely $0$. Then, for any $\rho>0$, one has
$B_n = o(e^{\rho n})$.
\end{thm}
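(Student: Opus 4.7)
My plan is to argue by contradiction. Suppose there exist $\rho>0$ and a subsequence $(n_j)$ along which $B_{n_j}\geq e^{\rho n_j}$; I want to derive a contradiction from the conservativity of $T$. As in the proof of Proposition~\ref{prop:finite}, non-degeneracy of $Z$ gives $\epsilon,\alpha>0$ with $P(\abs{Z}>\epsilon)>2\alpha$, so that $P(\abs{S_n f}>\epsilon B_n)>\alpha$ for all large $n$. To eliminate the Birkhoff sum I use $\abs{S_n f(x)}\leq n\max_{0\leq k<n}\abs{f(T^k x)}$: the event $\{\abs{S_n f}>\epsilon B_n\}$ then forces the orbit of $x$ to hit the super-level set $F_n := \{\abs{f}>\epsilon B_n/n\}$ within the first $n$ steps, so
\begin{equation*}
  P\Bigl(\bigcup_{k=0}^{n-1} T^{-k}F_n\Bigr) > \alpha.
\end{equation*}
Along the subsequence the threshold $\epsilon B_{n_j}/n_j\geq\epsilon e^{\rho n_j}/n_j$ diverges, so $P(F_{n_j})\to 0$ and in particular the sets $F_{n_j}$ shrink to zero $m$-measure.

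Conservativity enters through Halmos's recurrence theorem: for any measurable $F$ with $m(F)>0$, $m$-almost every point of $F$ returns to $F$ infinitely often, and since $P\ll m$ the same holds $P$-almost surely once the orbit enters $F$. This lets me upgrade the hitting event above to an ``infinitely often'' event and then look for a contradiction via the subadditivity estimate
\begin{equation*}
  \alpha < P\Bigl(\bigcup_{k=0}^{n-1} T^{-k}F_n\Bigr) \leq \sum_{k=0}^{n-1}(T^k_*P)(F_n).
\end{equation*}
The goal is to show that the right-hand side is eventually smaller than $\alpha$ along $n=n_j$, producing the desired contradiction.

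The main obstacle is controlling this sum in the absence of a $T$-invariance hypothesis. In a probability-preserving system one has $(T^k_*P)(F_n)=P(F_n)$ and the bound collapses to $n\,P(F_n)$, which vanishes thanks to the rapid shrinking of $F_{n_j}$; in the present non-singular conservative setting one must instead exploit the transfer operator, writing $h=dP/dm$ so that $d(T^k_*P)/dm=\hat T^k h$, and use that conservativity (via the Chacon--Ornstein or Hurewicz ratio ergodic theorem) prevents the iterates $\hat T^k h$ from concentrating their mass on a sequence of $m$-shrinking sets uniformly in $k$. A cleaner alternative, more directly parallel to Proposition~\ref{prop:finite}, would be to establish $B_{2n}\leq CB_n$ by writing $S_{2n}f=S_n f+S_n f\circ T^n$ and combining the lower bound on $\{\abs{S_{2n}f}>\epsilon B_{2n}\}$ with a tightness bound $P(\abs{S_n f}<LB_n)>1-\alpha$; the delicate point is to transfer this tightness from $x$ to its shift $T^n x$, and conservativity is precisely the tool that should make such a transfer possible.
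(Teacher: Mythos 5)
There is a genuine gap: what you give is a program whose central estimate is left open, and neither of the two routes you sketch can be completed. For the union-bound route, you need $\sum_{k=0}^{n-1}(T^k_*P)(F_n)<\alpha$ along the bad subsequence, but nothing in the hypotheses gives quantitative tail decay for $f$: even when $P$ is $T$-invariant, $n\,P(\abs{f}>\epsilon e^{\rho n}/n)$ need not tend to $0$ (take tails of order $1/\log t$), so the bound does not ``collapse'' as claimed. Moreover $m(F_n)$ may well be infinite (the level sets of $f$ need not have finite $m$-measure), so ``the sets shrink to zero $m$-measure'' is unjustified; and, most importantly, conservativity works \emph{against} you here: it gives $\sum_k \hat T^k h=\infty$ a.e., i.e.\ it forces sums of the form $\sum_k\int_{F}\hat T^k h\dd m$ to be large, and there is no general ``non-concentration uniformly in $k$'' statement of the kind you invoke via Chacon--Ornstein/Hurewicz. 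In the paper conservativity is used precisely in the opposite direction, to produce \emph{lower} bounds on hitting/return events, not upper bounds on $\sum_k(T^k_*P)(F_n)$.

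Your ``cleaner alternative'' is provably impossible: the inequality $B_{2n}\leq CB_n$ fails under exactly these hypotheses, as the paper's own stretched-exponential example shows ($B_n=e^{n^\alpha}$ with limit $\alpha\delta_0+(1-\alpha)\delta_1$ satisfies all assumptions, yet $B_{2n}/B_n\to\infty$), so no amount of transferring tightness through $T^n$ can establish it. What the paper proves instead (Lemma~\ref{lem:C}) is the weaker $B_{n+1}\leq C\max(B_0,\dotsc,B_n)$, obtained by splitting off only a \emph{fixed} number $K$ of initial terms and transferring through $\hat T^k$, $k\leq K$, where $K$ is supplied by conservativity via $\sum_{k\leq K}h\circ T^k\geq 1$ with high probability --- not through $T^n$ with $n$ varying. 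Even this inequality only yields $B_n=O(C^n)$, i.e.\ it excludes superexponential but not exponential growth; the theorem's conclusion requires the paper's second, independent idea, absent from your proposal: a constant $L$ depending only on the law of $Z$, valid along a set of times of density one (via the disjoint sets where $\abs{f}\in(LM_n,LM_{n+1})$ and Lemma~\ref{lem:measure_disjoint}), which can then be applied to the iterates $T^j$ with the \emph{same} $L$, making the effective per-step growth $L^{O(1/j)}$ and hence beating every exponential rate $\rho>0$.
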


\begin{rmk}
As a test case for the usability of proof assistants for current mathematical
research, Theorem~\ref{thm:infinite} and its proof given below have been
completely formalized and checked in the proof assistant Isabelle/HOL, see
the file \verb+Normalizing_Sequences.thy+ in~\cite{Ergodic_Theory-AFP2018}.
In particular, the correctness of this theorem is certified.
\end{rmk}

The main intuition behind the proof of Theorem~\ref{thm:infinite} is the
following. Assume by contradiction that $B_{n+1}$ is much larger than $B_n$.
Then
\begin{equation}
\label{eq:oiwuxcvpoiuop}
S_{n+1} f(x)/B_{n+1} = f(x)/B_{n+1} + S_n f(Tx)/B_n \cdot B_n/B_{n+1}.
\end{equation}
The term $f(x)/B_{n+1}$ is small in distribution. Moreover, $S_n f/B_n$ is
tight, so when multiplied by $B_n/B_{n+1}$ one should get something small,
and get a contradiction with the fact that the limit $Z$ is not a Dirac mass
at $0$. There is a difficulty: in~\eqref{eq:oiwuxcvpoiuop}, there is an
additional composition by $T$ in $S_n f(T x)/B_n$. Still, this heuristic
argument can be made into a rigorous proof that $B_{n+1} \leq C \max_{i \leq
n} B_i$, for some $C>0$, in Lemma~\ref{lem:C}. This excludes superexponential
growth for $B_n$, but not exponential growth. To improve this bound, we will
take advantage of the opposite decomposition $S_{n+1}f = S_n f+ f\circ T^n$,
to obtain another growth control in Lemma~\ref{lem:Bnk}, that only holds
along a subsequence of density $1$ but in which one loses a multiplicative
constant $L$ that only depends on the distribution of $Z$. The point is that
this estimate also applies to the system $T^j$ and the sequence $B_{jn}$ for
any $j$, replacing in essence $L$ with $L^{1/j}$ and making it arbitrarily
close to $1$. The theorem then follows from the combination of these two
lemmas.

An important tool in the proof is the transfer operator $\hat T$, i.e., the
predual of the composition by $T$, acting on $L^1(m)$. It satisfies $\int f
\cdot g\circ T \dd m = \int \hat T f\cdot g \dd m$. We will use the following
characterizations of conservativity: for any nonnegative $f$, then
$\sum_{n=0}^\infty f(T^n x)$ and $\sum_{n=0}^\infty \hat T^n f(x)$ are both
infinite for almost every $x$ with $f(x)>0$, see Propositions~1.1.6 and~1.3.1
in~\cite{aaronson_book}.

We start with a lemma asserting that the growth from $B_n$ to $B_{n+1}$ is
uniformly bounded.

\begin{lem}
\label{lem:C} Under the assumptions of Theorem~\ref{thm:infinite}, there
exists $C$ such that, for all $n$, one has $B_{n+1} \leq C \max(B_0,\dotsc,
B_n)$.
\end{lem}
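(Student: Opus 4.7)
The plan is by contradiction: suppose the conclusion fails and extract a subsequence $(n_k)$ with $B_{n_k+1}/D_{n_k}\to\infty$, where $D_n\coloneqq\max(B_0,\dots,B_n)$. In particular $B_{n_k+1}\to\infty$ and $B_j/B_{n_k+1}\to 0$ uniformly for $j\leq n_k$. The first observation is to use the decomposition $S_{n_k+1}f = S_{n_k}f + f\circ T^{n_k}$: dividing by $B_{n_k+1}$, the term $S_{n_k}f/B_{n_k+1} = (S_{n_k}f/B_{n_k})(B_{n_k}/B_{n_k+1})$ tends to $0$ in $P$-probability, because the first factor is tight (it converges in distribution to $Z$) and the second factor tends to $0$. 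Combined with $S_{n_k+1}f/B_{n_k+1}\to Z$ in distribution under $P$, this forces
\[
 f\circ T^{n_k}/B_{n_k+1}\to Z \quad\text{in distribution under } P.
\]
Since $Z$ is nondegenerate, picking $\epsilon,\delta>0$ with $P(\abs{Z}>\epsilon)>2\delta$ yields, for large $k$, $(P\circ T^{-n_k})(E_k)>\delta$ where $E_k\coloneqq\{\abs{f}>\epsilon B_{n_k+1}\}$; equivalently, setting $h\coloneqq dP/dm$, one has $\int_{E_k}\hat T^{n_k}h\dd m > \delta$.

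The harder step is to turn this concentration estimate into a contradiction using conservativity. Because $f$ is real-valued and $B_{n_k+1}\to\infty$, the sets $E_k$ collapse (modulo a null set) to $\{\abs{f}=\infty\}$. To exploit this, one can complement the above analysis by applying the other decomposition $S_{n+1}f = f + S_n f\circ T$ iteratively to obtain, for each fixed $j\geq 1$, that $f\circ T^{n_k-j}/B_{n_k+1}\to 0$ in $P$-probability. Thus the orbit of a typical point under $P$ first sees $\abs{f}$ exceed $\epsilon B_{n_k+1}$ at time exactly $n_k$, with probability at least $\delta-o(1)$. The main obstacle is then to derive the contradiction from this first-hitting pattern: by the conservativity characterization, orbits started in the positive-measure set $E_K$ (fixed, $K$ large, so that $E_k\subset E_K$ for $k\geq K$) return to $E_K$ infinitely often, so typical points with $T^{n_k}x\in E_k\subset E_K$ must revisit $E_K$ at later times as well; combined with the identity $\sum_n\hat T^n h = \infty$ a.e.\ on $\{h>0\}$, the Markov-operator property of $\hat T$ (which preserves the $L^1$-norm of densities), and careful bookkeeping of the masses $(P\circ T^{-n})(E_K)$, this incompatibility between the sharp jump of $\abs{f}\circ T^{n_k-j}/B_{n_k+1}$ across $j=0$ and the recurrence imposed by conservativity produces the desired contradiction.
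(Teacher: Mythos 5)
Your first paragraph is sound: from $B_{n_k+1}/\max(B_0,\dots,B_{n_k})\to\infty$ one does get $S_{n_k}f/B_{n_k+1}\to 0$ in probability, hence $f\circ T^{n_k}/B_{n_k+1}\to Z$ and $P(T^{-n_k}E_k)>\delta$ for $E_k=\{\abs{f}>\epsilon B_{n_k+1}\}$. The gap is in the second paragraph, where the contradiction must actually be produced: it names the obstacle and asserts that ``careful bookkeeping'' resolves it, but no argument is given, and the facts you have assembled there are not contradictory by themselves. Shrinking sets $E_k$ with null intersection can perfectly well satisfy $P(T^{-n_k}E_k)\geq\delta$ along a subsequence of a conservative system (in the shift over a recurrent random walk, let $E_k$ be the paths starting at height at least $c_k$ and take $n_k$ much larger than the time needed to reach height $c_k$). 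What conservativity yields for such shrinking or disjoint targets is exactly Lemma~\ref{lem:measure_disjoint}: decay of $P(T^{-n}A_n)$ along a set of integers of \emph{density one}. That cannot close your argument, because the bad subsequence $(n_k)$ you extracted may have density zero --- this is precisely why this route only proves the weaker, density-one bound of Lemma~\ref{lem:Bnk}, while Lemma~\ref{lem:C} needs a different mechanism. Your remaining observations do not bridge this: the smallness of $f\circ T^{n_k-j}/B_{n_k+1}$ is only established for $j$ in a bounded window (so ``first sees $\abs{f}$ exceed $\epsilon B_{n_k+1}$ at time exactly $n_k$'' is an overstatement), and recurrence to $E_K$ constrains a.e.\ point \emph{of} $E_K$, not the $P$-typical orbit before it enters $E_k$.

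The missing idea is to split off the \emph{first} few terms of the Birkhoff sum rather than the last one: write $S_nf(x)=S_kf(x)+S_{n-k}f(T^kx)$ for $1\leq k\leq K$, where $K$ is chosen by conservativity so that $\sum_{k=1}^K h(T^kx)\geq 1$ outside a set of small $P$-measure, with $h=\dd P/\dd m$ normalized so that $h\leq 1$. If $B_n>C\max_{i<n}B_i$ with $n$ large, then on an event of $P$-measure at least $\alpha$ one has $\abs{S_{n-k}f(T^kx)}/B_{n-k}>C$ for every relevant $k$, and the mass of this event is bounded by $\sum_{k=1}^K\int 1_{\{\abs{S_{n-k}f}/B_{n-k}>C\}}\,\hat T^kh\dd P$; each summand is small because $P(\abs{S_mf}/B_m>C)<\epsilon$ for large $m$ and the finitely many functions $\hat T^kh$ are uniformly integrable with respect to $P$. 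This transfer-operator step is what controls the law of $S_{n-k}f\circ T^k$ under $P$ (which is not $T$-invariant) for \emph{every} $n$, not just along a density-one set, and it is absent from your proposal; without it or a substitute of comparable strength the argument does not close.
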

The idea of the proof is that $S_n f(x)/B_n = S_k f(x)/B_n + S_{n-k}f(T^k
x)/B_n$. The first term is small if $n$ is large and $k$ is fixed, while the
second term should be small with high probability if $B_n/B_{n-k}$ is large,
as $S_{n-k} f/B_{n-k}$ is distributed like $Z$ with respect to $P$. The
difficulty is that there is a composition with $T^k$, under which $P$ is not
invariant, so one can not argue that $S_{n-k} f(T^k x)/B_{n-k}$ is
distributed like $Z$. But one can still take advantage of the conservativity
to make the argument go through.
\begin{proof}
Replacing $m$ with $m+P$, we can assume without loss of generality that $h
=\dd P/\dd m$ is bounded by $1$. We can also assume that
$\max(B_0,\dotsc,B_n)$ tends to infinity, since the conclusion is obvious
otherwise. Multiplying $f$ and $Z$ by a constant, we can also assume
$\Pbb(\abs{Z}>2)>3\alpha>0$ as $Z$ is not concentrated at $0$.

By conservativity, $\sum_{k=1}^K h(T^k x)$ tends $m$-almost everywhere to
$+\infty$ on the set $\{h>0\}$, and therefore $P$-almost everywhere.
Therefore, we can fix $K$ such that $P(\sum_{k=1}^K h(T^k x) \geq 1) >
1-\alpha$.

Let $\delta>0$ be small enough so that $K\delta < \alpha$. Let $\epsilon >0$
be small enough so that, for any $k\leq K$ and for any measurable set $U$
with $P(U)<\epsilon$, one has
\begin{equation}
  \label{eq:delta}
  \int 1_U(x) \hat T^k h(x) \dd P(x) < \delta.
\end{equation}
This is possible since the function $\hat T^k h(x)$ is integrable with
respect to $m$ (its integral is $\int h \dd m = P(X) = 1$), and therefore
with respect to $P$ as $P \leq m$.

Let $C>0$ be such that $\Pbb(\abs{Z}\geq C) < \epsilon$. We will show that
$B_n \leq C \max_{i<n} B_k$ for large enough $n$, by contradiction. Assume
instead that $n$ is large and $B_n > CB_k$ for any $k<n$. In particular,
since $\max(B_0,\dotsc, B_{n-1})$ tends to infinity with $n$, this implies
that $B_n$ is very large. With probability at least $3\alpha$, we have
$\abs{S_n f/B_n}>2$, as $\Pbb(\abs{Z} > 2) > 3\alpha$ and $S_n f/B_n$ tends
in distribution to $Z$. Moreover, $P(\forall k\leq K, \abs{S_k f}/B_n \leq
1)$ is arbitrarily close to $1$, as $B_n$ is very large while $K$ is fixed.
In particular, it is $\geq 1-\alpha$. This shows that the event
\begin{equation*}
  V =  \left\{x \st \sum_{k=1}^K h(T^k x) \geq 1 \text{ and }\abs{S_n f(x)/B_n}>2 \text{ and }
  \forall k\in [1, K], \abs{S_k f(x)}/B_n \leq 1\right\}
\end{equation*}
has probability at least $\alpha$, as it the intersection of three sets whose
measures are at least $1-\alpha$ and $3\alpha$ and $1-\alpha$. For $x\in V$,
we have for any $k\in [1, K]$ the inequality $\abs{S_{n-k} f(T^k x)}/B_n \geq
1$, and therefore $\abs{S_{n-k} f(T^k x)}/B_{n-k} > C$. We get
\begin{equation*}
  \alpha \leq P(V) = \int 1_V(x) \dd P(x) \leq
  \int \left(\sum_{k=1}^K h(T^k x) 1_{\abs{S_{n-k} f}/B_{n-k} > C}(T^k x)\right) \dd P(x).
\end{equation*}
Writing $\dd P = h\dd m$ and changing variables by $y= T^k x$ in the $k$-th
term of the sum, we obtain
\begin{align*}
  \alpha &\leq \sum_{k=1}^K \int h(y)1_{\abs{S_{n-k} f}/B_{n-k} > C}(y)\cdot \hat T^k h(y) \dd m(y)
  \\&
  = \sum_{k=1}^K \int 1_{\abs{S_{n-k} f}/B_{n-k} > C}(y) \cdot \hat T^k h(y) \dd P(y).
\end{align*}
As $\Pbb(\abs{Z}\geq C) < \epsilon$, we have $P(\abs{S_m f}/B_m > C) <
\epsilon$ for large enough $m$. Then the property of $\delta$ given
in~\eqref{eq:delta} ensures that $\int 1_{\abs{S_m f}/B_m
> C}(y) \cdot \hat T^k h(y) \dd P(y) \leq \delta$. This gives
\begin{equation*}
  \alpha \leq \sum_{k=1}^K \delta = K\delta.
\end{equation*}
This is a contradiction as $K\delta<\alpha$ by construction.
\end{proof}
If we could show that $C$ in Lemma~\ref{lem:C} can be taken arbitrarily close
to $1$, then Theorem~\ref{thm:infinite} would follow directly. However, we do
not know if this is true. What we will show instead is that such an
inequality is true along a sequence of integers of density $1$. This will be
enough to conclude the proof. To proceed, we will need the following
technical lemma, relying on conservativity.
\begin{lem}
\label{lem:measure_disjoint} Let $T$ be a conservative dynamical system on
the measure space $(X,m)$. Consider a finite measure $P$ which is absolutely
continuous with respect to $m$, and disjoint measurable sets $A_n$. Then
$P(T^{-n}A_n)$ tends to $0$ along a set of integers of density $1$.
\end{lem}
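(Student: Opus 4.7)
The plan is to reduce density-one convergence to Cesaro convergence, dualize through the transfer operator, and then exploit the conservativity of $T$.

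By the Koopman-von Neumann lemma, since $a_n \coloneqq P(T^{-n}A_n) \in [0,1]$ is bounded, the desired density-one convergence is equivalent to $\frac{1}{N}\sum_{n=1}^N a_n \to 0$. Writing $h = \dd P/\dd m \in L^1(m)$ and applying Fubini,
\begin{equation*}
\sum_{n=1}^N a_n = \int_X \Card\{n \leq N \st T^n x \in A_n\}\, \dd P(x).
\end{equation*}
Since the integrand divided by $N$ lies in $[0,1]$, by bounded convergence it suffices to show that, for $P$-almost every $x$, the set $H(x) \coloneqq \{n \st T^n x \in A_n\}$ has density zero.

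To prove this density-zero statement, I would first truncate $h = h_1 + h_2$ with $h_1$ bounded by some constant $M$ and supported on a set $B$ of finite $m$-measure, while $\norm{h_2}_{L^1} < \epsilon$; the $h_2$-contribution to the Cesaro average is at most $\epsilon$, hence harmless. By disjointness of the $A_n$, distinct elements of $H(x)$ correspond to distinct orbit points, and on the $h_1$-side one has $\sum_{n\leq N} P_1(T^{-n}A_n) \leq M \int_B \Card\{n \leq N \st T^n x \in A_n\}\, \dd m(x)$. If $m(X) < \infty$, the disjointness gives $\sum_n m(A_n) \leq m(X) < \infty$ and the Borel-Cantelli lemma forces $H(x)$ to be almost surely finite, so the density is trivially zero. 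If $m(X) = \infty$, I would apply Hopf's ratio ergodic theorem to the indicator $1_B$: any fixed finite-$m$-measure set is visited by the orbit along a set of times of density zero, so hits in $H(x)$ landing inside $B$ contribute density zero.

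The main obstacle I expect is in the infinite-measure case, controlling the hits where $T^n x \in A_n \setminus B$, since a naive bound by ``non-return times'' is useless (the orbit spends most of its time outside $B$). Here I would dualize the expected count back through the transfer operator, writing it as $\sum_n \int_{A_n \setminus B} \hat T^n 1_B\, \dd m$, and use that $\hat T$ is an $L^\infty$-contraction together with the Cesaro convergence $\frac{1}{N}\sum_{n \leq N}\hat T^n 1_B \to 0$ a.e., which holds in the conservative infinite-measure setting. This is the subtle point where the exact form of conservativity enters crucially; I would not be surprised if the published argument bypasses the case split and proceeds by a direct contradiction, extracting a wandering set from a hypothetical failure of density-one convergence.
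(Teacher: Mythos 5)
The reduction to Cesaro convergence via Koopman--von Neumann is exactly how the paper finishes, and your observation that disjointness of the $A_n$ limits multiple hits is the right germ; but the body of your argument implicitly assumes that $m$ is $T$-invariant (and in places ergodic), whereas the lemma is stated for conservative \emph{non-singular} maps and is used in that generality. Since $m$ only matters through its equivalence class, a case split on $m(X)$ finite or infinite cannot carry dynamical information. Concretely: in your finite-measure case, Borel--Cantelli requires $\sum_n m(T^{-n}A_n)<\infty$, while disjointness only gives $\sum_n m(A_n)\le m(X)$; without invariance these are unrelated. In your infinite-measure case, the assertion that a fixed set of finite $m$-measure is visited along a set of times of density zero is false in general: take an ergodic circle rotation and replace Lebesgue measure by an equivalent measure of infinite total mass; the system is still conservative and non-singular with $m(X)=\infty$, yet any set of finite $m$-measure and positive Lebesgue measure is visited with positive frequency. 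Likewise $\hat T$ is an $L^\infty$-contraction (i.e.\ $\hat T 1\le 1$) essentially only when $m$ is invariant, and the a.e.\ convergence $\frac1N\sum_{n\le N}\hat T^n 1_B\to 0$ needs invariance plus ergodicity (or an ergodic decomposition).

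Even granting an invariant, ergodic, infinite measure, your last step does not close. By disjointness, $\sum_{n\le N} 1_{A_n\setminus B}\,\hat T^n 1_B \le \sup_{n\le N}\hat T^n 1_B$ pointwise, so the quantity you need, $\frac1N\sum_{n\le N}\int_{A_n\setminus B}\hat T^n 1_B \dd m$, is only bounded by $\frac1N\int \sup_{n\le N}\hat T^n 1_B\dd m$, which can be of order $m(B)$ (the mass of $\hat T^n 1_B$ may sit exactly where $A_n$ does), and $m(B)$ has to be large to capture most of $h$; a.e.\ Cesaro convergence of $\hat T^n 1_B$ on a \emph{fixed} set says nothing about integrals over the moving sets $A_n$. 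The missing idea, and what the paper actually does (no case split, no wandering sets), is to first replace $m$ by an equivalent finite measure and reduce to $P=m$ a probability, and then to use conservativity quantitatively: choose $K$ and a set $U$ with $m(X\setminus U)\le\epsilon/2$ on which $\sum_{j=0}^{K-1}\hat T^j 1\ge L$; the disjointness of the $A_n$ within a sliding window of length $K$, dualized through $\hat T$, then yields $\sum_{p=K}^{N-K} m(T^{-p}A_p\cap U)\le N/L$, hence a Cesaro average at most $\epsilon$ for $L$ large. Your proposal would need this (or some substitute exploiting conservativity in the non-singular setting) to be a proof.
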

\begin{proof}
Replacing $m$ by a finite measure which is equivalent to it, we can assume
that $m$ is finite. Replacing $P$ by $P+m$ (which only makes the conclusion
stronger), we can also assume that $P$ is equivalent to $m$. Finally,
replacing $m$ by $P/P(X)$, we can even assume that $P=m$, and that it is a
probability measure.

Let $\epsilon>0$. Let $L$ be a large constant. Let $\hat T$ be the transfer
operator associated to $T$, i.e., the adjoint (for the measure $m$) of the
composition by $T$. We have $\sum_{j=0}^{k-1} \hat T^j 1(x) \to \infty$
almost everywhere, by conservativity. In particular, one can find a set $U$
with $m(X\setminus U) \leq \epsilon/2$ and an integer $K$ such that, for all
$x\in U$, one has $\sum_{j=0}^{K-1} \hat T^j 1(x) \geq L$. This gives in
particular for all $n$
\begin{equation}
\label{eq:iopusqpoiusdqf}
  m(T^{-n} A_n) \leq \epsilon/2 + m(T^{-n}A_n \cap U).
\end{equation}

Let $n\geq K$. Since the sets $A_{n-i}$ are disjoint, we have
$\sum_{i=0}^{K-1} 1_{A_{n-i}} (T^n x) \leq 1$. Integrating this inequality,
and applying the transfer operator $\hat T^i$ to the $i$-th term in the sum,
we obtain
\begin{equation*}
  \sum_{i=0}^{K-1} \int \hat T^i 1(x) \cdot 1_{A_{n-i}} (T^{n-i} x) \dd m(x)\leq 1.
\end{equation*}
Let us sum this inequality over $n \in [K,N-1]$. Changing variables $p=n-i$
and discarding the terms with $p<K$ or $p>N-K$, we obtain
\begin{equation*}
  \sum_{p=K}^{N-K} \int \left(\sum_{i=0}^{K-1} \hat T^i 1(x)\right) 1_{A_p}(T^p x) \dd m(x) \leq N.
\end{equation*}
Since $\left(\sum_{i=0}^{K-1} \hat T^i 1(x)\right) \geq L$ on $U$, this gives
\begin{equation*}
  \sum_{p=K}^{N-K} m(T^{-p} A_p \cap U) \leq N/L.
\end{equation*}
Therefore, thanks to~\eqref{eq:iopusqpoiusdqf}
\begin{equation*}
  \sum_{p=0}^{N-1} m(T^{-p} A_p) \leq N \epsilon/2 + 2K + \sum_{p=K}^{N-K} m(T^{-p} A_p \cap U)
  \leq N \epsilon/2 + 2K + N/L.
\end{equation*}
If $L$ is large enough, this is bounded by $N\epsilon$ for large $N$. We have
proved that $m(T^{-p}A_p)$ tends to zero in Cesaro average. This is
equivalent to convergence to zero along a density one set of integers (see
for instance~\cite[Theorem~1.20]{walters}).
\end{proof}

We can take advantage of the previous lemma to obtain a bound on
$B_{n+1}/B_n$ which is only true along a subsequence of density $1$, but in
which we only lose an explicit multiplicative constant. The difference
between this lemma and Lemma~\ref{lem:C} is that the constant $L$ below does
only depend on the distribution of $Z$, contrary to the constant $C$ of
Lemma~\ref{lem:C}. This means that it will be possible to apply this lemma to
an iterate $T^j$ of $T$ with the same $L$, replacing in essence $L$ with
$L^{1/j}$ and making it arbitrarily close to $1$.

\begin{lem}
\label{lem:Bnk} Let $T$ be a conservative dynamical system on the measure
space $(X,m)$. Consider a probability measure $P$ which is absolutely
continuous with respect to $m$, and two measurable functions $f$ and g.
Assume that $(g+S_n f)/B_n$ converges in distribution with respect to $P$
towards a limiting random variable $Z$ which is not concentrated at $0$. Then
there exists $L>1$ depending only on the distribution of $Z$ such that
$B_{n+1} \leq L \max(B_0,\dotsc, B_n)$ along a set of integers of density
$1$.
\end{lem}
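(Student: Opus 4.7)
The plan is to argue by contradiction, assuming that the bad set $N = \{n \st B_{n+1} > L M_n\}$, where $M_n = \max(B_0, \dotsc, B_n)$, has positive upper density for an $L > 1$ to be chosen depending only on the distribution of $Z$. The key identity is the decomposition $g + S_{n+1} f = (g + S_n f) + f \circ T^n$; dividing by $B_{n+1}$ and using that $B_n/B_{n+1} < 1/L$ for $n \in N$, the tightness of $(g + S_n f)/B_n$ forces $(g + S_n f)/B_{n+1}$ to be uniformly small in distribution along $N$. Since the left-hand side tends in distribution to $Z$, this should make $f \circ T^n/B_{n+1}$ distributionally close to $Z$ along $N$, and in particular fall in a fixed annulus $(c, 2c']$ with uniform positive probability.

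The next step would be to feed this into Lemma~\ref{lem:measure_disjoint}. I would set $A_n = \{y \st c B_{n+1} < \abs{f(y)} \leq 2c' B_{n+1}\}$ for $n \in N$ and $A_n = \emptyset$ otherwise. For $n_1 < n_2$ both in $N$, one has $B_{n_2+1} > L M_{n_2} \geq L B_{n_1+1}$, so $B_{n+1}$ grows geometrically by a factor of at least $L$ along $N$; provided $L > 2c'/c$, the annuli $(c B_{n+1}, 2c' B_{n+1}]$ are therefore pairwise disjoint, and so are the $A_n$. The distributional estimate would give $P(T^{-n} A_n) \geq \alpha$ for all large $n \in N$, whereas Lemma~\ref{lem:measure_disjoint} forces $P(T^{-n} A_n) \to 0$ along a density-one subsequence, contradicting the positive upper density of $N$ by intersecting the two sets.

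The delicate step, and the main obstacle, is to choose $c, c', L$ depending only on the distribution of $Z$ so that the distributional estimate on $f \circ T^n/B_{n+1}$ holds uniformly along $N$. I would fix $\alpha > 0$ and pick $c, c'$ so that $\Pbb(\abs{Z} > 2c) > 2\alpha$ and $\Pbb(\abs{Z} > c') < \alpha/4$, then take $L$ large enough that $L > 2c'/c$ and $\Pbb(\abs{Z} > cL) < \alpha$. To promote this into the uniform bound along $N$, I would extract from any given sequence in $N$ a further subsequence on which $B_n/B_{n+1} \to r \in [0, 1/L]$ and the pair $((g + S_n f)/B_{n+1}, (g + S_{n+1}f)/B_{n+1})$ converges jointly in distribution to some $(Y_1, Y_2)$ with marginals $Y_1 \sim rZ$ and $Y_2 \sim Z$. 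A union bound using $\Pbb(\abs{Y_2} > 2c) > 2\alpha$, $\Pbb(\abs{Y_1} > c) \leq \Pbb(\abs{Z} > cL) < \alpha$, and $\Pbb(\abs{Y_j} > c') \leq \Pbb(\abs{Z} > c') < \alpha/4$ for $j = 1, 2$ yields $\Pbb(c < \abs{Y_2 - Y_1} \leq 2c') > \alpha/2$, and a standard subsequence argument (with $c, c'$ perturbed if needed to avoid atoms of $\abs{Z}$) then upgrades this to the uniform bound needed above.
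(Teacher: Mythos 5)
Your proposal is correct and follows essentially the same route as the paper: the decomposition $f\circ T^n = (g+S_{n+1}f)-(g+S_n f)$, constants chosen from the distribution of $Z$ so that for $n$ in the bad set $f\circ T^n/B_{n+1}$ lands in a fixed annulus with probability bounded below, and disjoint sets $A_n$ in the range of $f$ fed into Lemma~\ref{lem:measure_disjoint}. The paper takes $A_n=\{\abs{f}\in(LM_n,LM_{n+1})\}$, which is disjoint for trivial reasons and avoids your geometric-growth argument, and the subsequence-extraction step in your last paragraph can be replaced by direct portmanteau bounds (open sets for the lower bound, closed sets for the upper bounds), but these differences are cosmetic.
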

\begin{proof}
Multiplying $Z$, $f$ and $g$ by a suitable constant if necessary, we can
assume that $\Pbb(\abs{Z}>2) = 3\alpha>0$. Choose $L>3$ such that
$\Pbb(\abs{Z}\geq L-1) < \alpha$. Note that $L$ only depends on the
distribution of $Z$. Let $M_n = \max(B_0,\dotsc, B_n)$. Let us show that $\{n
\st B_{n+1} >L M_n\}$ has zero density. Let $A_n = \{x \st \abs{f(x)} \in
(LM_n, LM_{n+1})\}$. These sets are disjoint as $M$ is nondecreasing.
Consider $n$ such that $B_{n+1} > LM_n$.

Since $(g+S_nf)/B_n$ has a distribution close to that of $Z$, one has
$P(\abs{(g+S_nf)/B_n} < L) \geq 1-\alpha$. Therefore,
$P(\abs{(g+S_nf)/B_{n+1}} < 1) \geq 1-\alpha$ since $B_{n+1} > L B_n$. On the
other hand, $P(\abs{(g+S_{n+1} f)/B_{n+1}} \in [2, L-1]) \geq 3\alpha-\alpha
= 2\alpha$. Hence,
\begin{equation}
\label{eq:uopiwxucvioupxwcv}
  P\left\{x \st \abs{(g(x)+S_n f(x))/B_{n+1}} < 1
  \text{ and }\abs{(g(x)+S_{n+1} f(x))/B_{n+1}} \in [2, L-1]\right\} \geq \alpha.
\end{equation}
On this event, $f \circ T^n / B_{n+1} = (g+S_{n+1} f)/B_{n+1} -(g+S_n
f)/B_{n+1}$ has an absolute value in $(1, L)$, i.e., $f(T^n x) \in (B_{n+1},
LB_{n+1})$. Since $B_{n+1} > LM_n$, this shows that $x \in T^{-n} A_n$.
Hence, the event in~\eqref{eq:uopiwxucvioupxwcv} is contained in $T^{-n}
A_n$. Since the probability of $T^{-n}A_n$ tends to $0$ along a set of
density $1$ by Lemma~\ref{lem:measure_disjoint}, this shows that the
inequality $B_{n+1}>LM_n$ can only hold along a set of density $0$.
\end{proof}

We can now conclude the proof of Theorem~\ref{thm:infinite}.

\begin{proof}[Proof of Theorem~\ref{thm:infinite}]
Let $j>0$ and $k\in [0,j)$. We claim that, along a subsequence of density
$1$, we have
\begin{equation}
\label{eq:iwucvxipouwxcv}
  B_{nj+k} \leq  L \max(B_k, B_{j+k},\dotsc, B_{(n-1)j+k}),
\end{equation}
where $L=L(Z)$ is given by Lemma~\ref{lem:Bnk}. Indeed, consider the
conservative dynamical system $\tilde T = T^j$, the function $\tilde f = (S_j
f) \circ T^k$, and the function $\tilde g = S_k f$. Denoting by $\tilde S$
the Birkhoff sums for the transformation $\tilde T$, we have $\tilde g +
\tilde S_n \tilde f = S_{nj+k}f$. Therefore, $(\tilde g + \tilde S_n \tilde
f)/B_{nj+k}$ converges in distribution to $Z$. Lemma~\ref{lem:Bnk} then
implies~\eqref{eq:iwucvxipouwxcv}.

Define $M_n = \max(B_0,B_1,\dotsc, B_{(n+1)j-1})$. Along a subsequence of
density $1$, all the inequalities~\eqref{eq:iwucvxipouwxcv} hold for $k\in
[0,j)$. We get that $M_{n+1} \leq L M_n$ along a subsequence of density $1$.
Moreover, Lemma~\ref{lem:C} shows that $M_{n+1} \leq C^j M_n$ for all $n$. We
obtain $M_n \leq L^n \cdot (C^j)^{o(n)} M_0$. Therefore, for large enough
$n$, we have $M_n \leq L^{2n} M_0$. In particular, for any $m \in [nj,
(n+1)j)$, we get
\begin{equation*}
  B_m \leq M_n \leq L^{2n} M_0 \leq L^{2m/j} M_0.
\end{equation*}
The exponential growth rate of the term on the right hand side, in terms of
$m$, can be made arbitrarily small by taking $j$ large enough.
\end{proof}

\section{An example with a stretched exponential normalizing sequence}

Let us now describe an example showing that Theorem~\ref{thm:infinite}, which
excludes the exponential growth of normalizing sequences, is almost sharp.
Indeed, for any $\alpha<1$, we construct a conservative dynamical system
(more specifically a Markov chain, which preserves an infinite measure) and a
function $f$ such that $S_n f/e^{n^\alpha}$ converges in distribution to a
non-degenerate limit.

Let
\begin{equation}
\label{eq:distrib_p}
  p_n = c/(n (\log n)^2)
\end{equation}
for $n\geq 2$ and $p_0=p_1=0$, where $c$ is adjusted so that $p$ is a
probability distribution. We consider a recurrent Markov chain on a set $A$
for which the excursion away from a reference point $0$ has length $n$ with
probability $p_n$, and for which one can define the height of a point, i.e.,
how many steps away from $0$ it is.

The simplest such example is on the set $A=\{0\} \cup \{(n, i) \st n
> 1, i \in [1,n-1]\}$ in which one jumps from $0$ to $(n,1)$ with probability
$p_n$, from $(n,i)$ to $(n, i+1)$ with probability $1$ if $i<n-1$, and from
$(n,n-1)$ to $0$ with probability $1$. The height of $(n,i)$ is $i$. One can
also realize such an example on $A=\N$, with jumps from $n$ to $0$ with
probability $q_n$ and from $n$ to $n+1$ with probability $1-q_n$, where $q_n$
is defined inductively by the relation $p_{n+1} = (1-q_0)\dotsm (1-q_{n-1})
q_n$. The height of $n$ is just $n$ in this example.

Let $X$ be the space of possible trajectories of the Markov chain, $m$ the
corresponding (infinite) measure on trajectories, $\pi:X\to A$ the projection
associating to a trajectory its starting point, and $T$ the shift map
forgetting the first point of the trajectory. It is conservative as the
Markov chain is recurrent. Abusing notations, if $g$ is a function on $A$, we
will write $g(x)$ for $g(\pi x)$. For instance, $h(x)$ will denote the height
of the starting point of the trajectory $x$.

Let $P$ be the Markov measure on paths starting from $0$. Then, with respect
to $P$, the height $h(T^n x)$ is by definition distributed like $h(X_n)$
where $X_n$ is the random walk starting from $0$. Since $p_n$
in~\eqref{eq:distrib_p} tends very slowly to $0$, one expects most excursions
away from $0$ to be extremely long, so that $h(X_n)$ should be of the order
of magnitude of $n$ with large probability. This is true, but we will need
the following more precise estimate.

\begin{lem}
\label{lem:distrib_h} When $k\to \infty$ and $n-k \to \infty$, one has
\begin{equation*}
  P(h(T^n x) = k) \sim \frac{1}{(\log k) \cdot (n-k)}.
\end{equation*}
\end{lem}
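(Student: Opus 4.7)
The plan is to reduce the statement to the asymptotics of two simpler quantities: the excursion-length tail $\bar F(k) := P(\tau > k)$ (where $\tau$ has law $p$), and the return probability $u_j := P(X_j = 0)$ of the Markov chain started at $0$.

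First, I would derive the factorisation $P(h(T^n x) = k) = u_{n-k} \cdot \bar F(k)$ for $k \geq 1$. Indeed, at time $n$ the chain sits at height $k$ precisely when the current excursion started at time $n-k$ and has not yet completed; equivalently, $X_{n-k} = 0$ and the subsequent excursion length exceeds $k$. These two events have respective probabilities $u_{n-k}$ and $\bar F(k)$, and they are independent by the strong Markov property at time $n-k$.

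Second, I would estimate each factor. For the tail, $\bar F(k) = \sum_{m > k} c/(m(\log m)^2) \sim c/\log k$ by integral comparison with $\int_k^\infty dx/(x(\log x)^2) = 1/\log k$. For $u_j$, I would pass to generating functions, using $\hat u(s) = 1/(1 - \hat p(s))$. Splitting $\sum_n p_n (1 - s^n)$ at $n \approx 1/(1-s)$, together with the elementary estimate $\sum_{n \leq N} 1/(\log n)^2 \sim N/(\log N)^2$ and the tail just obtained, yields
\begin{equation*}
  1 - \hat p(s) \sim \frac{c}{\log(1/(1-s))} \quad\text{and hence}\quad \hat u(s) \sim \frac{\log(1/(1-s))}{c} = \sum_{j \geq 1} \frac{s^j}{cj}
\end{equation*}
as $s \to 1^-$. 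This strongly suggests the pointwise equivalent $u_j \sim 1/(cj)$, which combined with $\bar F(k) \sim c/\log k$ produces exactly the claimed asymptotic $1/((n-k)\log k)$.

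The hard part will be upgrading this generating-function estimate to the pointwise renewal asymptotic $u_j \sim 1/(cj)$: applied directly, Karamata's Tauberian theorem only delivers the Cesaro version $\sum_{j \leq N} u_j \sim (\log N)/c$, which is insufficient. To promote it, I would either invoke a strong renewal theorem for laws with slowly varying tails (a regime where the standard Garsia--Lamperti/Erickson formulas degenerate but variants are available under mild regularity of $p_n$), or else perform a direct singularity analysis exploiting the smoothness of $p_n = c/(n(\log n)^2)$: write $\hat u(s) = \log(1/(1-s))/c + R(s)$, estimate $R$ near $s = 1$ with quantitative error terms coming from the regularity of $\hat p$, and deduce that the Taylor coefficients of $R$ are $o(1/j)$, so that $u_j = 1/(cj) + o(1/j)$ as required.
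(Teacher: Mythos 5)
Your proposal follows essentially the same route as the paper: the identical factorisation $P(h(T^nx)=k)=u_{n-k}\cdot\bar F(k)$ via the renewal structure of excursions, the same tail estimate $\bar F(k)\sim c/\log k$, and the same crux, namely the local renewal asymptotic $u_j\sim 1/(cj)$ in the slowly-varying-tail regime, which the paper obtains by citing Nagaev's strong renewal theorem (your first option) in the form $u_s\sim \Pbb(Y=s)/\Pbb(Y>s)^2$. You correctly identify that Karamata alone is insufficient and that the strong renewal theorem is the essential input, so the argument is sound provided you take the citation route rather than the sketched singularity analysis.
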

\begin{proof}
Denote by $Y_i$ the distribution of the length of the $i$-th excursion away
from $0$ of the random walk. These variables are independent, and share the
same distribution $Y$ given by $\Pbb(Y=s)=p_s$. Then $X_n$ is at height $k$
exactly if there is an index $j$ such that $Y_1+\dotsb+Y_j = n-k$ and
moreover $Y_{j+1}>k$. Since the distribution of the excursion after $n-k$ is
independent of the fact that $n-k$ was reached by such a sum, we get
\begin{equation*}
  P( h(T^n x) = k) = \Pbb(Y>k)\cdot \Pbb(\exists j, Y_1+\dotsb+Y_j = n-k).
\end{equation*}
The last event in this equation is a \emph{renewal event}. Local asymptotics
of renewal probabilities are known for $p$ distributed as
in~\eqref{eq:distrib_p}: by~\cite{nagaev_no_moment} (see
also~\cite{alexander_berger_no_moment}), one has when $s\to \infty$
\begin{equation*}
  \Pbb(\exists j, Y_1+\dotsb+Y_j = s) \sim \frac{\Pbb(Y = s)}{\Pbb(Y>s)^2}.
\end{equation*}
Since $\Pbb(Y>s) \sim c/\log s$, we obtain
\begin{equation*}
  P( h(T^n x) = k) \sim \frac{c}{\log k} \cdot \frac{c/ (n-k) (\log (n-k))^2}{(c/\log (n-k))^2}
  = \frac{1}{(\log k) \cdot (n-k)}.
  \qedhere
\end{equation*}
\end{proof}

\begin{lem}
\label{lem:limit_distrib_h} Let $\beta \in (0,1)$. When $n$ tends to
infinity,
\begin{equation*}
P(h(T^n x) \in [n-n^\beta, n)) \to \beta.
\end{equation*}
\end{lem}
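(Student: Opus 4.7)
The plan is to rewrite the event as a finite sum and apply Lemma~\ref{lem:distrib_h} term by term. Writing $j = n - k$, one has
\[
P(h(T^n x) \in [n-n^\beta, n)) = \sum_{j=1}^{\lfloor n^\beta \rfloor} P(h(T^n x) = n-j).
\]
Since the asymptotic in Lemma~\ref{lem:distrib_h} is only valid when both $k$ and $n-k$ tend to infinity, I would fix $\epsilon>0$, choose $K$ large enough so that $\abs{(\log k)(n-k)\cdot P(h(T^n x) = k) - 1} \leq \epsilon$ whenever $\min(k, n-k) \geq K$, and split the sum at $j=K$.

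For the boundary terms with $j \leq K$, I would fall back on the exact decomposition $P(h(T^n x) = n-j) = \Pbb(Y > n-j) \cdot \Pbb(\exists \ell,\, Y_1 + \dotsb + Y_\ell = j)$ obtained inside the proof of Lemma~\ref{lem:distrib_h}. Since $\Pbb(Y > n-j) \sim c/\log(n-j) = O(1/\log n)$ uniformly for $j \leq K$, and the renewal probability is bounded by $1$, these $K$ terms contribute a total of $O(K/\log n) = o(1)$ as $n \to \infty$.

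For the main range $K < j \leq \lfloor n^\beta \rfloor$, the condition $\min(n-j, j) \geq K$ holds once $n$ is large, so each term equals $(1 + O(\epsilon)) \cdot \frac{1}{j \log(n-j)}$. Because $\beta < 1$, one has $\log(n-j) = \log n + \log(1 - j/n) = \log n \cdot (1 + o(1))$ uniformly over $j \leq n^\beta$. Summing and using the standard estimate $\sum_{j=K+1}^{\lfloor n^\beta \rfloor} 1/j = \beta \log n + O(1)$, one obtains a total of $(1 + O(\epsilon))(\beta + O(1/\log n))$. Letting first $n \to \infty$ and then $\epsilon \to 0$ yields the limit $\beta$.

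The main subtlety is the uniformity of the asymptotic from Lemma~\ref{lem:distrib_h}, which is handled by the $\epsilon$-$K$ splitting; the bounded-$j$ boundary terms, which escape the hypotheses of that lemma, are swallowed by the elementary $O(K/\log n)$ bound derived from the decomposition in the previous proof.
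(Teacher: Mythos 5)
Your proof is correct and follows essentially the same route as the paper's: rewrite the event as a sum of $P(h(T^n x)=k)$ over $k\in[n-n^\beta,n)$, apply Lemma~\ref{lem:distrib_h} termwise in the bulk, and conclude from $\frac{1}{\log n}\sum_{m\leq n^\beta} \frac1m \to \beta$. The only (minor) difference is the treatment of the finitely many terms with $n-k\leq K$, which escape the hypotheses of Lemma~\ref{lem:distrib_h}: the paper infers their negligibility indirectly from the fact that the range $k\in[n/2,n-C]$ already carries probability close to $1$, whereas you bound them directly by $O(K/\log n)$ via the exact renewal decomposition --- a slightly more self-contained variant of the same argument.
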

\begin{proof}
Let us first note that, with high probability, $k=h(T^n x)$ is bounded away
from $0$ and $n$. Indeed, if $C>0$ is fixed, the approximations for the
probabilities in Lemma~\ref{lem:distrib_h} satisfy
\begin{equation*}
  \sum_{k=n/2}^{n-C} \frac{1}{(\log k) \cdot (n-k)}
  \sim \frac{1}{\log n} \sum_{k=n/2}^{n-C} \frac{1}{n-k}
  \sim \frac{1}{\log n} \sum_{m=C}^{n/2} \frac{1}{m} \to 1.
\end{equation*}
Thanks to Lemma~\ref{lem:distrib_h}, this shows that the contribution of this
range of values of $h(T^n x)$ has probability close to $1$. Therefore, we may
without loss of precision replace $P(h(T^n x)=k)$ with the approximating
probabilities given in this lemma, and even by $1/((\log n) \cdot (n-k))$.

Therefore,
\begin{equation*}
  P(h(T^n x) \in [n-n^\beta, n))
  \sim \frac{1}{\log n} \sum_{k=n-n^\beta}^{n-1} \frac{1}{n-k}
  = \frac{1}{\log n} \sum_{m=1}^{n^\beta} \frac{1}{m}
  \sim \frac{\log(n^\beta)}{\log n} = \beta.
\qedhere
\end{equation*}
\end{proof}

Let $\alpha \in (0,1)$. Let $s(n) = e^{(n+1)^\alpha} - e^{n^\alpha} \geq 0$.
It is equivalent to $\tilde s(n)=\alpha n^{\alpha-1} e^{n^\alpha}$ at
infinity, so we could use $\tilde s$ instead of $s$ in what follows, but the
point of the construction is clearer with $s$. Define a function $f$ by $f(x)
= s(h(x)) \geq 0$. In this way, the sum of the values of $f$ along an
excursion up to height $k-1$ is $e^{k^\alpha}-1$.

\begin{thm}
\label{thm:large_normalization} The sequence $S_n f(x)/ e^{n^\alpha}$
converges in distribution with respect to $P$ towards a non-trivial random
variable, equal to $0$ with probability $\alpha$ and to $1$ with probability
$1-\alpha$.
\end{thm}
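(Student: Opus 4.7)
The plan is to exploit the excursion structure of the Markov chain under $P$. Let $k = h(T^n x)$ denote the height at time $n$. With $P$-probability tending to $1$ one has $k < n$, and then the trajectory has completed some number $j \geq 0$ of full excursions of lengths $Y_1, \dotsc, Y_j$ summing to $n - k$ before entering an unfinished excursion of length $>k$ that has already climbed $k$ steps. The telescoping identity $\sum_{h=0}^{m-1} s(h) = e^{m^\alpha} - 1$ then yields the exact decomposition
\begin{equation*}
S_n f(x) = R_n(x) + (e^{k^\alpha} - 1), \qquad R_n(x) = \sum_{i=1}^{j}(e^{Y_i^\alpha} - 1).
\end{equation*}
I will show that $(e^{k^\alpha} - 1)/e^{n^\alpha}$ carries the whole limit distribution while $R_n/e^{n^\alpha} \to 0$ in $P$-probability, and Slutsky's theorem will finish the job.

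For the main term, the basic computation is the Taylor expansion $k^\alpha - n^\alpha = -\alpha n^{\alpha-1}(n-k)(1+o(1))$, valid as soon as $(n-k)/n \to 0$. This last property holds with probability tending to $1$: taking $\beta$ close to $1$ in Lemma~\ref{lem:limit_distrib_h} shows $P(n - k \geq \epsilon n) \to 0$ for any $\epsilon > 0$. Now fix a small $\delta > 0$. If $n - k \leq n^{1-\alpha - \delta}$ then the exponent tends to $0$ and the ratio $(e^{k^\alpha} - 1)/e^{n^\alpha}$ tends to $1$; if $n - k \geq n^{1-\alpha + \delta}$ then the exponent tends to $-\infty$ and the ratio to $0$. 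Applying Lemma~\ref{lem:limit_distrib_h} with $\beta = 1-\alpha \pm \delta$, these two regimes have asymptotic probabilities $1-\alpha-\delta$ and $\alpha - \delta$ respectively, while the intermediate window has probability at most $2\delta$. Letting $\delta \to 0$ yields that $(e^{k^\alpha} - 1)/e^{n^\alpha}$ converges in distribution to the variable equal to $1$ with probability $1 - \alpha$ and $0$ with probability $\alpha$.

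For the error term, the crude bounds $\max_i Y_i \leq \sum_i Y_i = n - k$ and $j \leq n$ give $R_n \leq n \cdot e^{(n - k)^\alpha}$. On the event $\{n - k \leq (1 - \epsilon) n\}$, which as above has probability tending to $1$ for any fixed $\epsilon > 0$,
\begin{equation*}
R_n/e^{n^\alpha} \leq n \exp\bigl(((1-\epsilon)^\alpha - 1)\, n^\alpha\bigr) \longrightarrow 0.
\end{equation*}
The delicate point is the discontinuity of the limit in the second paragraph: the limiting value is a step function of the statistic $\log(n-k)/\log n$ with a jump precisely at $1-\alpha$, and one must excise an arbitrarily thin window around this threshold. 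This is admissible precisely because the limiting law of $\log(n-k)/\log n$ extracted from Lemma~\ref{lem:limit_distrib_h} is uniform on $[0,1]$ and so puts no atom at $1-\alpha$.
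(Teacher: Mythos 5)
Your proof is correct and follows essentially the same route as the paper: the exact decomposition of $S_nf$ into the completed excursions plus the tail $e^{k^\alpha}-1$ of the current one, the expansion $k^\alpha-n^\alpha\approx-\alpha n^{\alpha-1}(n-k)$ creating the dichotomy at $n-k\asymp n^{1-\alpha}$, and Lemma~\ref{lem:limit_distrib_h} to convert the two regimes into probabilities $1-\alpha$ and $\alpha$. The only (harmless) variation is that you control the completed-excursion term $R_n$ once and for all on the event $\{n-k\leq(1-\epsilon)n\}$ and conclude by Slutsky, where the paper bounds the earlier excursions separately inside each of the two regimes.
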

\begin{proof}
Let $\epsilon>0$. We claim that, if $h(T^n x) \in [n-n^{1-\alpha-\epsilon},
n)$ (which happens asymptotically with probability $1-\alpha-\epsilon$ by
Lemma~\ref{lem:limit_distrib_h}) then $S_n f(x)/e^{n^\alpha}$ is close to
$1$. Moreover, we claim that, if $h(T^n x) \in [n-n^{1-\epsilon},
n-n^{1-\alpha+\epsilon}]$ (which happens asymptotically with probability
$(1-\epsilon)-(1-\alpha+\epsilon) = \alpha -2\epsilon$ by
Lemma~\ref{lem:limit_distrib_h}) then $S_n f(x)/e^{n^\alpha}$ is close to
$0$. As the probabilities of these two events almost add up to one (up to
$3\epsilon$), this shows the desired convergence in distribution by letting
$\epsilon$ tend to $0$.

It remains to prove the claims. For this, consider $k = n-n^\beta$ for some
$\beta$. Then
\begin{equation*}
k^\alpha = n^\alpha (1-n^{\beta-1})^\alpha = n^\alpha -\alpha n^{\alpha+\beta-1}(1+o(1)).
\end{equation*}
Therefore,
\begin{equation}
\label{eq:ekalpha}
  \frac{e^{k^\alpha}}{e^{n^\alpha}} = e^{-\alpha n^{\alpha+\beta-1}(1+o(1))}
  \to \begin{cases} 0 & \text{if } \beta>1-\alpha,\\ 1 & \text{if }\beta < 1-\alpha.\end{cases}
\end{equation}

Let us now prove the claims. Assume first that $h(T^n x) \in
[n-n^{1-\alpha-\epsilon},n)$. Then the sum of the values of $f$ along the
last excursion up to time $n-1$ is equal to $\sum_{j=0}^{h(T^n x)-1} s(j) =
e^{h(T^n x)^\alpha} - 1$. By~\eqref{eq:ekalpha} with
$\beta<1-\alpha-\epsilon$, we deduce that this sum divided by $e^{n^\alpha}$
is close to $1$. We should then add the contributions of the first
excursions. Since their total length is bounded by $n^{1-\alpha-\epsilon}$,
the maximal height they could have reached is $n^{1-\alpha-\epsilon}$, and
their total contribution is at most $n \cdot
e^{(n^{1-\alpha-\epsilon})^\alpha}$. This is negligible. This proves the
first claim.

The second claim, for $h(T^n x) \in [n-n^{1-\epsilon}, n -
n^{1-\alpha+\epsilon}]$, is proved analogously. Indeed, by~\eqref{eq:ekalpha}
with $\beta > 1-\alpha+\epsilon$, the contribution of the last excursion is
negligible. And the contribution of the other excursions is bounded by $n
e^{n^{(1-\epsilon) \alpha}}$ and is also negligible. This concludes the
proof.
\end{proof}

\begin{rmk}
In our example, the sequence $p_n=c/(n (\log n)^2)$ tends very slowly to $0$.
If one tries to build a less extreme example, by taking $p_n = c/n^\gamma$
for some $\gamma>1$, then the construction fails. Indeed, one can check that
in this case $h(T^n x)$ is distributed over the whole interval $[0,n]$
(instead of being very concentrated around $n$). More precisely, $h(T^n x)/n$
converges in distribution to a random variable with support equal to $[0,1]$
(this follows from direct computations, or from the arcine law for waiting
times~\cite{thaler_zweimuller}). Then $e^{h(T^n x)^\alpha} / e^{n^\alpha}$
converges in distribution to $0$, giving a degenerate limit theorem.
\end{rmk}

\begin{rmk}
\label{rmk:qsidufpioqsdf} The above example is quite flexible. For instance,
instead of obtaining the limit $\alpha \delta_0 + (1-\alpha)\delta_1$, one
can obtain any limiting distribution $Z$ which is equal to $0$ with
probability $\alpha$ and to $Z'$ with probability $(1-\alpha)$ where $Z'$ is
an arbitrary real random variable, with an ergodic conservative map, as
follows. We describe it in the setting of the random walk on $\N$ above, with
return probabilities $q_n$ from $n$ to $0$. Replace the space $\N$ of the
random walk with $\N \times \R$, with the following transition probabilities.
Jump from $(n,t)$ to $(n+1,t)$ with probability $1-q_n$, and from $(n,t)$ to
$\{0\} \times \R$ with probability $q_n \otimes \Pbb_{Z'}$ where $\Pbb_{Z'}$
is the distribution of $Z'$. If one starts from $P = \delta_0 \otimes
\Pbb_{Z'}$, then the first component of this new random walk is distributed
like the original random walk on $\N$, and the second component is always
distributed like $Z'$, with independence every times the walk returns to $0$.
Define a function $g(n,t) = t e^{n^\alpha}$, and let $f(x)= g(T x)-g(x)$.
Then the Birkhoff sum $S_n f(x)$ is equal to $t e^{k^\alpha} - g(x)$ if
$\pi(T^n x) = (k, t)$. The proof of Theorem~\ref{thm:large_normalization}
then shows that $S_n f/e^{n^\alpha}$ converges to $Z$.
\end{rmk}


\begin{rmk}
One can also construct an example where $S_n f/B_n$ converges to a nontrivial
distribution, but $B_{n+1}/B_n$ does not tend to $1$. Consider the same
example as in Theorem~\ref{thm:large_normalization}, but where the
probabilities $p_n$ are nonzero only for even $n$: there is a periodicity
phenomenon in this Markov chain, of which we will take advantage. Define
$g(n) = e^{n^\alpha}$ for even $n$, and $g(n) = e^{n^\alpha}/2$ for odd $n$.
Let $f(x) = g(Tx) - g(x)$. Then $S_n f/B_n$ converges in distribution with
respect to $P$ towards $Z = \alpha \delta_0 + (1-\alpha)\delta_1$, where $B_n
= e^{n^\alpha}$ for even $n$ and $B_n = e^{n^\alpha}/2$ for odd $n$. In this
example, if one denotes by $Q$ the measure on trajectories starting at $1$,
then $S_n f/B_n$ does not converge to $Z$ with respect to $Q$, while it does
with respect to $P$, whereas $P$ and $Q$ are both absolutely continuous with
respect to the invariant infinite measure $m$. This phenomenon does not
happen for probability-preserving maps, by Eagleson's
Theorem~\cite{eagleson}. This shows that Eagleson's Theorem is only valid in
full generality for probability-preserving maps, and not for conservative
maps in general. Zweim\"uller has proved in~\cite{zweimuller_mixing} that
Eagleson's Theorem holds for Birkhoff sums in conservative maps under an
additional assumption of asymptotic invariance, which in our setting
translates to the fact that $f\circ T^n/B_n$ converges to $0$ in
distribution. Indeed, this is not the case in the previous counterexample.
\end{rmk}

\bibliography{biblio}
\bibliographystyle{amsalpha}
\end{document}